\newtheorem{theorem}{Theorem}[section]
\newtheorem{lemma}[theorem]{Lemma}
\newtheorem{proposition}[theorem]{Proposition}
\theoremstyle{definition}
\newtheorem{definition}[theorem]{Definition}
\newtheorem{example}[theorem]{Example}
\theoremstyle{remark}
\newtheorem{remark}[theorem]{Remark}
\numberwithin{equation}{section}
\begin{document}
\setcounter{page}{1}

\title[numerical radius of a quaternionic normal operator ]{on the numerical radius of a quaternionic normal operator}

\author{G. Ramesh$^{*}$}

\address{Department of Mathematics\\I. I. T. Hyderabad, Kandi(V) \\  Sangareddy, Telangana \\ India-502 285.}

\email{rameshg@iith.ac.in}

\thanks{}
\subjclass[2010]{47S10, 43B15, 35P05}

\date{\today}

\keywords{ quaternionic Hilbert space, normal operator, compact operator, right eigenvalue, norm attaining operator, Lindenstrauss theorem}
\date{\today
\newline \indent $^{*}$ Corresponding author}
\begin{abstract}
We prove that  for a  right linear  bounded normal operator on a quaternionic Hilbert space (quaternionic bounded normal operator) the norm and the numerical radius are equal. As a consequence of this result we  give a new proof of the known fact that a non zero  quaternionic compact normal operator has a non zero right eigenvalue.  Using this we give
a new proof of the spectral theorem for quaternionic compact normal
operators.. Finally, we show that every quaternionic compact operator  is norm attaining and prove the Lindenstrauss theorem on norm attaining operators, namely, the set of all norm attaining quaternionic operators is norm dense in the space of all bounded quaternionic operators defined between two quaternionic Hilbert spaces.
\end{abstract}
\maketitle
\section{Introduction}
     It is well known that  for a bounded normal operator on a complex  Hilbert space,  the norm and the numerical radius are  the same. In this note, we prove this result for right linear normal operators on a quaternionic Hilbert space. As a consequence of this result, we show that every compact normal operator on a quaternionic Hilbert space has a non zero right eigenvalue. This is a crucial point in proving the spectral representation theorem for such operators.

The spectral theorem for compact normal operators on a  quaternionic Hilbert space is  appeared in a recent article by Ghiloni etal \cite{Ghiloni1}. The authors mainly used the left multiplication on the space of all bounded right linear operators on a  quaternionic Hilbert space.  The spectral theorem for  matrices with quaternionic entries was studied in \cite{Farenick}. In this article, we give a new proof of
the spectral theorem for compact operators on general quaternionic
Hilbert spaces (see \cite[Theorem 1.2]{Ghiloni1}).  First, we prove that the norm and the numerical radius of a quaternionic normal operator are the same. To prove this, we associate a unique complex normal operator with the given quaternionic normal operator which preserve the norm and the numerical radius. Using this technique and the  classical result, we obtain the result.  As a consequence, we prove that   a quaternionic compact normal operator has a non zero right eigenvalue. Finally, with this idea, we give a new proof of the
spectral theorem for quaternionic compact normal operators. Later, we extend the Lindenstrauss theorem on norm attaining operators to the quaternionic case. A simple proof in the classical case can
be found in \cite{enfloetal}.


Organization of the article: In the second section we give necessary details of quaternionic Hilbert spaces and right linear operators on such spaces. In the third section we prove that for a normal quaternionic operator the norm and the numerical radius are equal. Using this we prove the spectral theorem for quaternionic compact operators. In the final section, we consider the norm attaining operators and prove the well known Lindenstrauss theorem on  norm attaining operators in the case of quaternionic operators.

\section{Preliminaries}

We denote the division ring of real quaternions by $\mathbb H$. If $q\in \mathbb H$, then $q = q_{0} + q_{1}i+q_{2}j+q_{3}k$, where $q_{n} \in \mathbb{R}$ for $n = 0, 1, 2, 3$ and $i,j,k$ satisfy the following conditions:
\begin{equation*}			
 i^{2}=j^{2}=k^{2}=-1,\; ij = -ji = k,\;  jk = -kj = i,\; \text{and}\;  ki = -ik = j.
\end{equation*}
 The conjugate of $q$ is $ \overline{q} = q_{0}-q_{1}i-q_{2}j-q_{3}k$ and $ |q| := \sqrt{q_{0}^{2}+q_{1}^{2}+q_{2}^{2}+q_{3}^{2}}$. The imaginary part of $\mathbb{H}$ is defined by  $Im(\mathbb{H}) = \left\{ q \in \mathbb{H} : q = -\overline{q}\right\}.$ The set of all unit imaginary quaternions is denoted by $\mathbb{S}$, that is $ \mathbb{S}:= \left\{ q \in Im(\mathbb{H}): |q| = 1 \right\}$ and  the unit sphere of $\mathbb H$ by $S_{\mathbb H}$.

Here we list out some of the properties of quaternions, which we need later.

\begin{enumerate}
 \item For $p,q \in \mathbb{H}$, we have $ \overline{pq} = \overline{q} \overline{p},\; |pq| = |p||q|$ and $|\overline{p}| = |\overline{q}|$.
\item We define an equivalence relation on $\mathbb{H}$ as, $p \sim q$ if and only if $ p = s^{-1}qs $ for some $ s \neq 0 \in \mathbb{H} $. The equivalence class of $p$ is $  [p]:= \{s^{-1}qs : 0 \neq s \in H\}$.
\item For each $m \in \mathbb{S}, \mathbb{C}_{m}: =\left\{ \alpha + m \beta : \alpha, \beta \in \mathbb{R} \right\} $ is a real subalgebra of $\mathbb{H}$ and is called as the slice complex plane generated by $1$ and $m$.
\item We have  $\mathbb{C}_{m} \cap \mathbb{C}_{n} = \mathbb{R}$ if $ m \neq \pm{n}$, and  $\mathbb{H} = \displaystyle \cup_{m \in \mathbb{S}} \mathbb{C}_{m}$.
\end{enumerate}

A right $\mathbb H$-module $H$ is called a quaternionic pre-Hilbert space if there exists
a Hermitian quaternionic scalar product; namely a function $\langle\cdot\rangle: H\times H\rightarrow \mathbb H$
satisfying the following:
\begin{enumerate}
\item  $\langle u,vp + wq\rangle = \langle u,v\rangle \, p + \langle u,w\rangle \,q$  for all  $u, v,w \in H$ and $p, q \in \mathbb H$
\item  $\langle u,v\rangle = \overline{\langle v,u\rangle}$  for all $u, v \in H$
\item $\langle u,u\rangle\geq 0 $ for all $u\in H$ and $\langle u,u\rangle=0$ iff $u=0$.
\end{enumerate}

Let $ H $ be a quaternionic pre-Hilbert space with Hermitian quaternionic scalar product  $ \left\langle \cdot   \right\rangle $ on $ H$.  Such an inner product $\langle \cdot \rangle$ satisfies the
Cauchy-Schwarz inequality:
\begin{equation*}
|\langle u,v\rangle|^2\leq \langle u,u\rangle \, \langle v,v\rangle\;  \text{for all}\; u,v\in H.
\end{equation*}
 Define $ \| u \|
= {\left\langle u, u \right\rangle}^{\frac{1}{2}}, $ for every $ u \in H.$  Then $\|\cdot\|$ is a norm in the usual real sense.  If the normed space $ (H, \|\cdot\|) $ is  complete, then $H$ is called a quaternionic Hilbert space.

The norm induced by this inner product satisfy the parallelogram law:
\begin{equation*}
\|u+v\|^2=\|u-v\|^2=2(\|u\|^2+\|v\|^2)\; \text{for all}\; u,v\in H.
\end{equation*}

We denote the unit sphere of the Hilbert space $H$ by $S_H$.

An operator $T:H\rightarrow H$ is said to be right linear if

\begin{enumerate}
\item $T(x+y)=Tx+Ty$  for all $x,y\in H$

\item $T(xq)=(Tx)q$ for all $x\in H,\; q\in \mathbb H$.
\end{enumerate}
A right linear operator $T:H\rightarrow H$ is said to be bounded if there exists a $M>0$ such that
$\|Tx\| \leq M\, \|x\|$ for all $x\in H$. For such an operator the norm is defined by
\begin{equation*}
\|T\| = \sup \left\{ \|Tu\| : u \in S_H \right\}.
\end{equation*}
We denote the space of all bounded right linear operators on $H$ by $\mathcal B(H)$. For $T\in \mathcal B(H)$, the null space is defined by $N(T)={\{x\in H:Tx=0}\}$ and the range space is defined by
$R(T)={\{Tx:x\in H}\}$.

Let $ T \in \mathcal{B}(H). $ Then there exists a unique operator $T^{*} \in \mathcal{B}(H)$ such that $ \left\langle u,  Tv\right\rangle = \left\langle T^{*}u ,v\right\rangle $ for all $ u,v \in H. $ This operator $ T^{*} $ called the adjoint of $T.$

Let $T \in \mathcal{B}(H).$ Then $T$ is said to be self-adjoint if $ T = T^{*}$, an  anti self-adjoint if $T^{*}= -T$, normal if $ TT^{*} = T^{*}T $ and  unitary if $ TT^{*} = T^{*}T = I$. If $T$ is self-adjoint and $\langle x,Tx\rangle \geq 0$ for all $x\in H$, then $T$ is said to be positive.

 Let $T \in \mathcal{B}(H)$ be positive. Then there exists  a unique positive operator $S \in \mathcal{B}(H)$ such that $S^{2}=T.$ Such a $S$ is called the square root of $T$ and is denoted by $S = T^{\frac{1}{2}}.$

 If $S\in \mathcal B(H)$. Then the operator $|S|:=(S^*S)^{\frac{1}{2}}$ is called as  the modulus of $S$. In fact, there exists a partial isometry  $V$  ($\|Vx\|=\|x\|$ for all $x\in N(V)^{\bot}$) such that $T=V|T|$ and $N(V)=N(T)$. This decomposition is unique and is known as the \textit{polar decomposition} of $T$ (We refer to \cite[Theorem 2.20]{Ghiloni} for more details).

Let  $ T \in \mathcal{B}(H)$ and $ q \in \mathbb{H}.$ Define
\begin{equation*}
 \Delta_{q}(T):= T^{2}-T(q+\overline{q})+I|q|^{2},
\end{equation*}
where, if $r\in \mathbb R$, the operator  $Tr\in \mathcal B(H)$ is defined by setting $(Tr)x:=(Tx)r$ for all $x \in H$.
The spherical spectrum of $T$ is defined as
\begin{equation*}
\sigma_{S}(T):= \{ q \in \mathbb{H}: \Delta_{q}(T) \; \text{ is not invertible in} \; \mathcal{B}(H)\}.
\end{equation*}
The spherical point spectrum is defined as
\begin{equation*}
\sigma_{pS}(T) := {\{ q \in \mathbb{H}: \Delta_{q}(T)\; \text{is not one-to-one }}\}.
\end{equation*}
All the above mentioned material can be found in \cite{Ghiloni}.

The numerical range and the numerical radius of $T$ is defined by
\begin{align*}
W(T)&={\{\langle x,Tx\rangle:x\in S_H}\},\\
w(T)&=\sup{\{|\langle x,Tx\rangle|:x\in S_H}\},
\end{align*}
respectively.

For a normal operator on a complex Hilbert space the numerical range is convex, whereas, this is not the case for normal operators on quaternionic Hilbert spaces (see \cite{yeung} for details).

Let $T \in \mathcal{B}(H).$ Then $T$  is said to be compact if ${T(B)}$ is pre-compact for every bounded subset $B$ of $H.$ Equivalently, $\{T(x_{n})\}$ has a convergent subsequence for every bounded sequence $\{x_{n}\} $ of $ H$.
\section{Numerical radius of a normal operator}

Suppose that $H$ is a non zero quaternionic Hilbert space with Hermitian quaternionic scalar product $\langle \cdot, \cdot \rangle$. Let  $m \in \mathbb{S}$ and $ J \in \mathcal{B}(H)$ be an anti self-adjoint, unitary operator. Define $H_{\pm}^{Jm}:={\{u\in H:Ju=\pm um}\}$. Then $H_{\pm}^{Jm}$ is a non-zero closed subset of $H$. The restriction of the inner product on $H$ to $H_{\pm}^{Jm}$ is a $\mathbb C_m$-valued inner product and with respect to this inner product $H_{\pm}^{Jm}$ is a Hilbert space. In fact, if we consider $H$ as a $\mathbb C_m$ linear space, $H$ has the decomposition: $H=H_{+}^{Jm}\oplus H_{-}^{Jm}$ (see \cite[pages 21-22]{Ghiloni} for details).
We need the following results to prove our main theorem.
\begin{proposition} \cite[Proposition 3.11]{Ghiloni}\label{extension}
If $T \colon H^{Jm}_{+} \to H^{Jm}_{+} $ is a bounded $\mathbb{C}_{m}-$ linear operator, then there exists unique bounded, right $\mathbb{H}-$ linear operator $\widetilde{T}\colon H \to H$ such that $\widetilde{T}(u) = T(u),$ for every $u \in H^{Jm}_{+}.$

Furthermore,

\begin{enumerate}
\item $\|\widetilde{T}\| = \|T\|$
\item $J\widetilde{T} = \widetilde{T} J$
\item $(\widetilde{T})^{*} = \widetilde{T^{*}}$
\item If $S \colon H^{Jm}_{+} \to H^{Jm}_{+}$ is a bounded $\mathbb{C}_{m}-$ linear operator, then $\widetilde{ST} = \widetilde{S} \widetilde{T}$
\item If $S$ is the inverse of $T,$ then $\widetilde{S}$ is the inverse of $\widetilde{T}.$

\end{enumerate}
On the other hand,   if $V\in \mathcal B(H)$, then  there exists a unique $U\in \mathcal B(H_{+}^{Jm})$ such that $\widetilde{U}=V$ if and only if $JV = VJ$.
\end{proposition}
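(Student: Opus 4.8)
The plan is to realize $H$ concretely through $H_+^{Jm}$ by introducing a second imaginary unit, and then to let right $\mathbb H$-linearity force the extension. First I would fix $n\in\mathbb S$ with $nm=-mn$; such an $n$ exists because any unit imaginary quaternion orthogonal to $m$ (in the three-dimensional space $Im(\mathbb H)$) anticommutes with it, and then $\{1,m,n,mn\}$ is an $\mathbb R$-basis of $\mathbb H$. A one-line computation shows that right multiplication by $n$ carries $H_+^{Jm}$ into $H_-^{Jm}$: if $Ju=um$ then $J(un)=(um)n=u(mn)=-u(nm)=-(un)m$. Since this map is a bijection ($n^2=-1$), the $\mathbb C_m$-decomposition $H=H_+^{Jm}\oplus H_-^{Jm}$ refines to $H=H_+^{Jm}\oplus H_+^{Jm}\,n$, so every $x\in H$ is uniquely $x=u+wn$ with $u,w\in H_+^{Jm}$. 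I would then define $\widetilde T(u+wn):=T(u)+T(w)\,n$.

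The substantive step is to check that $\widetilde T$ is right $\mathbb H$-linear. Additivity is immediate, so by the basis $\{1,m,n,mn\}$ it suffices to verify $\widetilde T(xq)=\widetilde T(x)q$ for $q\in\mathbb C_m$ and for $q=n$. Both reductions are driven by the twisted commutation relation $nc=\overline c\,n$ for $c\in\mathbb C_m$ (verified directly from $nm=-mn$), together with $n^2=-1$ and the $\mathbb C_m$-linearity of $T$: the $\mathbb C_m$-case reduces to $T(uc)=T(u)c$ and $T(w\overline c)\,n=(T(w)n)c$, while the case $q=n$ reduces to the re-bracketing $xn=-w+un$, giving $\widetilde T(xn)=-T(w)+T(u)n=\widetilde T(x)n$. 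Uniqueness is then automatic, because any right $\mathbb H$-linear extension of $T$ must satisfy $\widetilde T(wn)=\widetilde T(w)n=T(w)n$ on $H_-^{Jm}$, so it agrees with the operator just constructed.

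For the norm identity (1) I would first record the orthogonality of the two summands in the right form: for $u\in H_+^{Jm}$, $v\in H_-^{Jm}$, unitarity of $J$ gives $\langle u,v\rangle=\langle Ju,Jv\rangle=\langle um,-vm\rangle=m\langle u,v\rangle m$, so $\langle u,v\rangle$ anticommutes with $m$; hence it is purely imaginary and orthogonal to $\mathbb C_m$. In particular $\langle u,v\rangle+\langle v,u\rangle=0$, which yields $\|u+wn\|^2=\|u\|^2+\|wn\|^2=\|u\|^2+\|w\|^2$, using that right multiplication by the unit quaternion $n$ is isometric. Applying this to both $x$ and $\widetilde T(x)$ gives $\|\widetilde T(x)\|^2=\|Tu\|^2+\|Tw\|^2\le\|T\|^2\|x\|^2$, so $\|\widetilde T\|\le\|T\|$; the reverse inequality is clear since $\widetilde T$ restricts to $T$ on $H_+^{Jm}$.

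The remaining items follow formally. Property (2) is a direct check on the two summands, using that $J$ is right multiplication by $m$ on $H_+^{Jm}$ and by $-m$ on $H_-^{Jm}$. For (3) I would confirm that $\widetilde{T^*}$ satisfies the defining relation of $(\widetilde T)^*$ by expanding $\langle\widetilde T x,y\rangle$ and $\langle x,\widetilde{T^*}y\rangle$ on the decomposition and comparing $\mathbb C_m$-components, using that $\langle\cdot,\cdot\rangle$ restricted to $H_+^{Jm}$ is exactly the $\mathbb C_m$-inner product for which $T^*$ is adjoint to $T$. Properties (4) and (5) are then consequences of uniqueness: $\widetilde S\widetilde T$ and $\widetilde{ST}$ are both right $\mathbb H$-linear and agree with $ST$ on $H_+^{Jm}$, and $\widetilde I=I$, so (4) holds and (5) is the case $S=T^{-1}$. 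For the converse, if $JV=VJ$ then $Ju=um$ forces $J(Vu)=V(um)=(Vu)m$, so $V$ preserves $H_+^{Jm}$; setting $U:=V|_{H_+^{Jm}}$ produces a bounded $\mathbb C_m$-linear operator whose extension $\widetilde U$ agrees with $V$ on $H_+^{Jm}$ and hence everywhere by uniqueness, while the forward implication is property (2). The main obstacle throughout is the bookkeeping in the right-linearity and adjoint verifications, where the noncommutativity of $\mathbb H$ enters solely through $nc=\overline c\,n$; once this relation and the $\mathbb C_m$-orthogonality of $H_+^{Jm}$ and $H_-^{Jm}$ are in place, every check is mechanical.
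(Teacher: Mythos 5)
Your argument is correct and complete. Note, however, that the paper itself offers no proof of this proposition --- it is quoted verbatim from \cite[Proposition 3.11]{Ghiloni} --- and your construction (choosing $n\in\mathbb S$ anticommuting with $m$, writing $H=H_{+}^{Jm}\oplus H_{+}^{Jm}n$, defining $\widetilde T(u+wn)=Tu+(Tw)n$, and pushing everything through the relation $nc=\overline{c}\,n$ for $c\in\mathbb C_m$) is essentially the same route taken in that cited source, so there is nothing to compare beyond saying the reconstruction is faithful.
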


If $T$ is normal(but not self-adjoint), there exists an anti self-adjoint, unitary $J\in \mathcal B(H)$ such that $TJ=JT$ (see \cite[Theorem 5.9]{Ghiloni} for details). Hence Proposition \ref{extension} holds with $V=T$.
If $T$ is self-adjoint, then the existence of an anti self-adjoint, unitary $J\in \mathcal B(H)$ such that $TJ=JT$ is guaranteed by \cite[Theorem 5.7(b)]{Ghiloni}.

\begin{remark}\label{sumextension}
If $S,T \in \mathcal B( H^{Jm}_{+})$, then it can be easily shown that $\widetilde{S+T}=\tilde S+\tilde T$ by following the same steps as in  \cite[Proposition 3.11]{Ghiloni}.
\end{remark}

\begin{theorem}\label{normaloid}
Let $T\in \mathcal B(H)$ be normal. Then $w(T)=\|T\|$.
\end{theorem}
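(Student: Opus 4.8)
The plan is to reduce the quaternionic statement to the classical complex one via the restriction/extension correspondence of Proposition \ref{extension}. Since $T$ is normal, the remarks following Proposition \ref{extension} furnish an anti self-adjoint, unitary $J\in\mathcal B(H)$ with $TJ=JT$. Fixing any $m\in\mathbb S$, the converse direction of Proposition \ref{extension} (applied with $V=T$) then yields a unique $\mathbb C_m$-linear operator $U\in\mathcal B(H^{Jm}_+)$ with $\widetilde U=T$, where $H^{Jm}_+$ is regarded as a complex (i.e.\ $\mathbb C_m$-) Hilbert space under the restricted inner product.

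The first step is to verify that $U$ is normal. Using property (3) we have $T^*=(\widetilde U)^*=\widetilde{U^*}$, and property (4) gives $TT^*=\widetilde U\,\widetilde{U^*}=\widetilde{UU^*}$ and similarly $T^*T=\widetilde{U^*U}$. From $TT^*=T^*T$ and the uniqueness (injectivity) of the extension map $U\mapsto\widetilde U$, we conclude $UU^*=U^*U$, so $U$ is a bounded normal operator on the complex Hilbert space $H^{Jm}_+$. By the classical theorem for bounded normal operators on a complex Hilbert space, $w(U)=\|U\|$, and property (1) gives $\|U\|=\|\widetilde U\|=\|T\|$.

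The second step is to compare the numerical ranges. Since the restricted inner product on $H^{Jm}_+$ is the restriction of $\langle\cdot,\cdot\rangle$, the two norms coincide on $H^{Jm}_+$, so every unit vector $x\in H^{Jm}_+$ is also a unit vector of $H$; moreover $Tx=\widetilde U x=Ux\in H^{Jm}_+$, whence $\langle x,Tx\rangle=\langle x,Ux\rangle$ lies in $\mathbb C_m$ and agrees with the value computed inside $H^{Jm}_+$. This shows $W(U)\subseteq W(T)$, and therefore $w(U)\le w(T)$.

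Finally, combining these facts with the Cauchy--Schwarz bound $w(T)\le\|T\|$, valid on any quaternionic Hilbert space, yields
\[
\|T\|=\|U\|=w(U)\le w(T)\le\|T\|,
\]
which forces equality throughout and proves $w(T)=\|T\|$ (and, as a byproduct, $w(T)=w(U)$). I do not expect a genuine obstacle here; the only point requiring care is to \emph{avoid} attempting the harder inequality $w(T)\le w(U)$ directly --- which would force one to control $\langle x,Tx\rangle$ for vectors with a nonzero $H^{Jm}_-$ component and the cross terms it produces --- and instead to close the estimate using the trivial bound $w(T)\le\|T\|$ together with the single easy inclusion $W(U)\subseteq W(T)$.
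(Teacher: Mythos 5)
Your proposal is correct and follows essentially the same route as the paper: restrict $T$ to the complex Hilbert space $H^{Jm}_+$ via Proposition \ref{extension}, apply the classical equality $w=\|\cdot\|$ for complex normal operators there, and close the argument with the inclusion $W(T_+)\subseteq W(T)$ together with the trivial bound $w(T)\le\|T\|$. The only difference is that you spell out the verification that the restricted operator is normal (via properties (3), (4) and uniqueness of the extension), which the paper leaves implicit.
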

\begin{proof}
First note that $S_{H^{Jm}_{+}}\subseteq S_H$.  Let $T_{+}\in \mathcal B(H^{Jm}_{+} )$  be such that  $\widetilde {T_{+}}=T$ as in Proposition \ref{extension}.
Then
\begin{align*}
w(T)=\sup{\{|\langle Tx,x\rangle |:x\in S_H}\}&\geq \sup{\{|\langle Tx,x\rangle |:x\in S_{H^{Jm}_{+}} }\}\\
                                                                        &= \sup{\{|\langle T_{+}x,x\rangle |:x\in S_{H^{Jm}_{+}} }\}\\
                                                                        &=w(T_{+}).
\end{align*}
Since $T_{+}$ is normal, we have $\|T_{+}\|=w(T_{+})$. But $\|T_{+}\|=\|T\|$. This shows that $w(T)\geq \|T\|$. But the other inequality is clear. Thus $w(T)=\|T\|$.
\end{proof}

As a consequence we obtain a new proof of the following known result.
\begin{theorem}\label{existenceofeigenvalue} \cite[Theorem 1.1]{Ghiloni1}
If $T\in \mathcal B(H)$ is compact and normal, then  there exists a $q\in \sigma_{pS}(T)$ such that $|q|=\|T\|$.
\end{theorem}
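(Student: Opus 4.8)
The plan is to deduce the result directly from Theorem \ref{normaloid} and the compactness of $T$, by manufacturing an honest right eigenvector as a strong limit. If $T=0$ the claim is immediate, since then $\Delta_{0}(T)=T^{2}=0$ is not one-to-one and $q=0$ works; so I assume $\|T\|>0$. First I would invoke Theorem \ref{normaloid} to write $w(T)=\|T\|$ and pick unit vectors $x_{n}\in S_{H}$ with $q_{n}:=\langle x_{n},Tx_{n}\rangle$ satisfying $|q_{n}|\to\|T\|$. Because $\{q_{n}\}$ lies in the closed ball of radius $\|T\|$ of $\mathbb H\cong\mathbb R^{4}$, which is compact, I may pass to a subsequence with $q_{n}\to q$ and $|q|=\|T\|$. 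Cauchy--Schwarz gives $|q_{n}|\le\|Tx_{n}\|\le\|T\|$, so $\|Tx_{n}\|\to\|T\|$ as well.

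The heart of the argument is the identity
\begin{equation*}
\|Tx_{n}-x_{n}q_{n}\|^{2}=\|Tx_{n}\|^{2}-|q_{n}|^{2},
\end{equation*}
obtained by expanding the inner product and using $\langle Tx_{n},x_{n}\rangle=\overline{q_{n}}$ together with $\|x_{n}\|=1$. Since $\|Tx_{n}\|^{2}$ and $|q_{n}|^{2}$ both converge to $\|T\|^{2}$, the right-hand side tends to $0$, so $Tx_{n}-x_{n}q_{n}\to0$. Now compactness of $T$ lets me extract a further subsequence with $Tx_{n}\to y$; combined with the previous limit this forces $x_{n}q_{n}\to y$, and since $q_{n}\to q\neq0$ I get $x_{n}\to x_{0}:=yq^{-1}$. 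Then $\|x_{0}\|=\lim\|x_{n}\|=1$ and, by continuity of $T$, $Tx_{0}=y=x_{0}q$, so $q$ is a right eigenvalue of $T$ with unit eigenvector $x_{0}$ and $|q|=\|T\|$.

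It then remains to place $q$ in $\sigma_{pS}(T)$. From $Tx_{0}=x_{0}q$ I get $T^{2}x_{0}=(Tx_{0})q=x_{0}q^{2}$, hence
\begin{equation*}
\Delta_{q}(T)x_{0}=x_{0}q^{2}-x_{0}q(q+\overline{q})+x_{0}|q|^{2}=x_{0}\bigl(q^{2}-q^{2}-q\overline{q}+|q|^{2}\bigr)=0,
\end{equation*}
using $q\overline{q}=|q|^{2}$. As $x_{0}\neq0$, the operator $\Delta_{q}(T)$ is not one-to-one, so $q\in\sigma_{pS}(T)$ with $|q|=\|T\|$, which finishes the proof.

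I expect the main obstacle to be the norm identity and the squeeze deducing $Tx_{n}-x_{n}q_{n}\to0$: this is exactly where Theorem \ref{normaloid} does the work, converting the asymptotic equality in Cauchy--Schwarz into a genuine eigenvector once the compactness of $T$ is applied. An alternative, in the spirit of the reduction behind Theorem \ref{normaloid}, would be to pass to the associated $\mathbb{C}_{m}$-linear operator $T_{+}\in\mathcal B(H^{Jm}_{+})$ with $\widetilde{T_{+}}=T$: one checks that $T$ leaves $H^{Jm}_{+}$ invariant (since $JT=TJ$) and that $T_{+}$ inherits compactness and normality, so the classical spectral theorem supplies an eigenvalue $\lambda\in\mathbb{C}_{m}$ of $T_{+}$ with $|\lambda|=\|T_{+}\|=\|T\|$; lifting through Proposition \ref{extension} gives $Tx=x\lambda$, and the same $\Delta_{\lambda}$ computation places $\lambda$ in $\sigma_{pS}(T)$.
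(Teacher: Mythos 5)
Your proof is correct and follows essentially the same route as the paper: extract $x_n$ with $\langle x_n,Tx_n\rangle\to q$, $|q|=\|T\|$ via Theorem \ref{normaloid}, expand $\|Tx_n-x_nq_n\|^2$ to show it tends to $0$, and use compactness to produce the eigenvector. Your version is marginally tidier in two spots — the exact identity $\|Tx_n-x_nq_n\|^2=\|Tx_n\|^2-|q_n|^2$ (versus the paper's limit argument through $y$) and the explicit verification that $\Delta_q(T)$ kills $x_0$, where the paper instead cites Proposition 4.5 of \cite{Ghiloni1} — but these are cosmetic.
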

\begin{proof}
If $T=0$, then it suffices to set $q=0$. Suppose $T\neq 0$. By Theorem \ref{normaloid}, there exists a sequence $(x_n)$ in $S_H$ such that $|\langle x_n,Tx_n\rangle |\rightarrow  \|T\|$ as $n\rightarrow \infty$.  If necessary, choose a subsequence of $(x_n)$, ( we again denote it by $(x_n)$) such that $\langle x_n,Tx_n\rangle  \rightarrow q$ for some $q\in \mathbb H\setminus {\{0}\}$ with $|q|=\|T\|$. Since $T$ is compact, there exists a subsequence $(x_{n_k})$ of $(x_n)$ such that $Tx_{n_k}$ is convergent. Let $y:=\displaystyle \lim_{k\rightarrow \infty}Tx_{n_k}$. Observe that $\|Tx_k\|\leq \|T\|\|x_k\|\leq \|T\|$ for every $k$. It follows that $\|y\|\leq \|T\|=|q|$. Then
\begin{align*}
\|Tx_{n_k}-x_{n_k}q\|^2&=\langle Tx_{n_k}-x_{n_k}q,Tx_{n_k}-x_{n_k}q\rangle \\
                       &=\|Tx_{n_k}\|^2-\overline{\langle x_{n_k},Tx_{n_k}}\rangle q-\bar{q}\langle x_{n_k},Tx_{n_k}\rangle+|q|^2\\
                       &\rightarrow \|y\|^2-|q|^2\leq 0.
\end{align*}
Hence $\|y\|=|q|$; in particular, $y\neq 0$.  So $T(x_{n_k})-x_{n_k}q\rightarrow 0$ as $n\rightarrow \infty$. Since $(Tx_{n_k})$ converges to $y$, it follows that $x_{n_k}q\rightarrow y$.  Thus $Ty=\lim_{k\rightarrow \infty}T(x_{n_k}q)=\lim_{k\rightarrow \infty}T(x_{n_k})q=yq$. Thanks to Proposition $4.5$ of \cite{Ghiloni1}, we have that $q\in \sigma_{pS}(T)$.
\end{proof}

Using Theorem \ref{existenceofeigenvalue}  as in
the case of complex compact operators, we can give a new proof of the
spectral theorem for quaternionic compact normal operators (see \cite[Theorem 1.2] {Ghiloni1}).

\begin{definition}
Let $H_0$ be a quaternionic closed subspace of a quaternionic Hilbert space $H$. Then $H_0$ is said to be invariant under $T\in \mathcal B(H)$ if $T(H_0)\subseteq H_0$. If $H_0$ and $H_0^{\bot}$ are both invariant under $T$, then $H_0$ is said to be a reducing subspace for $T$.
\end{definition}
\begin{example}\label{onedmiminvsubsp}
Let $T\in \mathcal B(H)$ and $T\phi=\phi q$, where $|q|=\|T\|$ and $\phi\in S_H$. Then $H_0:=\displaystyle \text{span}_{\mathbb H}{\{\phi}\}$ is a non trivial reducing subspace for $T$. As $H_0$
is right linear we can see that $H_0$ is an invariant subspace for $T$. To show that $H_0$ reduces $T$,   it is enough to prove  $H_0$ to be invariant under $T^*$. Note that
\begin{align*}
|q|^2=\|T\phi\|^2=\langle T^*T\phi,\phi\rangle=\bar{q}\langle T^*\phi, \phi\rangle.
\end{align*}
Thus $\langle T^*\phi,\phi\rangle=q$. As $|\langle T^*\phi,\phi\rangle|=|q|$, we have $T^*\phi=\phi p$ for some $p\in \mathbb H$.  Then $q=\langle T^*\phi,\phi\rangle =\langle \phi p,\phi\rangle=\bar{p}$. Thus $p=\bar{q}$. As $T^*$ is right linear, we can conclude that $T^*(H_0)\subseteq H_0$.
\end{example}
\begin{lemma}\label{propertiesrestrictedop}
Let $T\in \mathcal B(H)$ be normal and $H_0$, a reducing subspace for $T$. Let $T_0:=T|_{H_0}$. Then
\begin{enumerate}
\item \label{restrictedadjoint}$T_0^*=T^*|_{H_0}$
\item \label{restrictednormality} $T_0$ is normal.
\end{enumerate}
\end{lemma}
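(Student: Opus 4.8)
The plan is to reduce both claims to a single structural fact: because $H_0$ reduces $T$, the subspace $H_0$ is invariant not only under $T$ but also under $T^*$. Once this is established, part (\ref{restrictedadjoint}) follows from the uniqueness of the adjoint and part (\ref{restrictednormality}) from a short composition argument.

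First I would show $T^*(H_0)\subseteq H_0$. Fix $x\in H_0$; since $H_0$ is a closed submodule we have $(H_0^{\bot})^{\bot}=H_0$, so it suffices to check $\langle T^*x,y\rangle=0$ for every $y\in H_0^{\bot}$. Using the defining relation of the adjoint, $\langle T^*x,y\rangle=\langle x,Ty\rangle$. Because $H_0$ reduces $T$, we have $T(H_0^{\bot})\subseteq H_0^{\bot}$, so $Ty\in H_0^{\bot}$, and since $x\in H_0$ this pairing vanishes. Hence $T^*x\in (H_0^{\bot})^{\bot}=H_0$. This step is the genuine content of the lemma: it is where the reducing hypothesis (as opposed to mere invariance under $T$) is used, and it relies on the orthogonal decomposition theorem for the closed submodule $H_0$ of the quaternionic Hilbert space $H$.

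With this in hand, part (\ref{restrictedadjoint}) is immediate. Regarding $H_0$ as a quaternionic Hilbert space under the restricted inner product, both $T_0=T|_{H_0}$ and $T^*|_{H_0}$ belong to $\mathcal B(H_0)$. For $u,v\in H_0$ the adjoint relation in $H$ gives $\langle u,T_0 v\rangle=\langle u,Tv\rangle=\langle T^*u,v\rangle=\langle (T^*|_{H_0})u,v\rangle$, and since the inner product of $H_0$ is just the restriction of that of $H$, the operator $T^*|_{H_0}$ satisfies the characterizing property of $T_0^*$. By the uniqueness of the adjoint in $\mathcal B(H_0)$, we conclude $T_0^*=T^*|_{H_0}$.

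Finally, for part (\ref{restrictednormality}) I would compose the restrictions. For $x\in H_0$ we have $T^*x\in H_0$, so $T_0 T_0^*x=T(T^*x)=(TT^*)|_{H_0}x$, and likewise $T_0^*T_0x=(T^*T)|_{H_0}x$. Since $T$ is normal, $TT^*=T^*T$, whence $T_0T_0^*=T_0^*T_0$, i.e. $T_0$ is normal. The only nonroutine input throughout is the invariance of $H_0$ under $T^*$, so that is where I expect the main obstacle to lie.
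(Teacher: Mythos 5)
Your proof is correct, and it diverges from the paper's in two ways worth noting. For part (\ref{restrictedadjoint}) the two arguments are close in spirit: both rest on the identity $\langle u,T_0v\rangle=\langle T^*u,v\rangle$ for $u,v\in H_0$, the paper concluding that $T_0^*x-T^*x$ lies in $H_0\cap H_0^{\bot}=\{0\}$ while you invoke uniqueness of the adjoint in $\mathcal B(H_0)$. However, you explicitly prove the key fact that $H_0$ is invariant under $T^*$ (via $\langle T^*x,y\rangle=\langle x,Ty\rangle=0$ for $y\in H_0^{\bot}$ and $(H_0^{\bot})^{\bot}=H_0$), whereas the paper simply asserts ``since $H_0$ reduces both $T$ and $T^*$'' without justification; your version fills that gap, and correctly identifies it as the only place the reducing hypothesis, rather than mere invariance, is needed. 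For part (\ref{restrictednormality}) the routes genuinely differ: the paper computes $\langle T_0^*T_0x,x\rangle=\langle T_0T_0^*x,x\rangle$ and appeals to a polarization identity to upgrade equality of quadratic forms to equality of operators, while you compose restrictions directly to get $T_0T_0^*=(TT^*)|_{H_0}$ and $T_0^*T_0=(T^*T)|_{H_0}$, which is more elementary and sidesteps any delicacy about polarization in the quaternionic setting (where one would otherwise note that both operators are self-adjoint). Your argument is, if anything, the tighter of the two.
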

\begin{proof}
Let $x,y\in H_0$. Then
\begin{equation*}
 \langle T_0^*x,y\rangle= \langle x,T_0y\rangle =\langle x,Ty\rangle = \langle T^*x,y\rangle.
\end{equation*}
We can conclude that $T_0^*x-T^*x\in H_0^{\bot}$. Also, since $H_0$ reduces both $T$ and $T^*$, it follows that
$T_0^*x-T^*x\in H_0$. That is $T_0^*x=T^*x$ for each $x\in H_0$. This completes the proof of (\ref{restrictedadjoint}).

To prove (\ref{restrictednormality}), let $x\in H_0$. Then we have
\begin{align*}
 \langle T_0^*T_0x,x\rangle=\langle Tx,Tx\rangle& =\langle x,T^*Tx\rangle\\
                                                                            & =\langle x,TT^*x\rangle\\
                                                                            &=\langle TT^*x,x\rangle\\
                                                                            &=\langle T_0T^*|_{H_0}x,x\rangle\\
                                                                            &=\langle T_0T_0^*x,x\rangle.
\end{align*}
Now the conclusion follows from the polarization identity.
\end{proof}

\begin{theorem}\label{spectralthm}
Let $T \in \mathcal B(H)$ be compact and normal. Then there exists a system of eigenvectors $\left\{\phi_{n} \right\}$ and corresponding right quaternion eigenvalues $\left\{q_{n}\right\}$  such that

\begin{equation}\label{spectralrepneqn}
  Tu = \sum_{n=1}^{\infty} \phi_{n}q_{n} \left\langle \phi_{n} | u \right\rangle,  \text{for all}\;   u \in H.
\end{equation}
Moreover, if $\{q_{n}\}$ is infinite, then $ q_{n} \rightarrow 0$ as $n \rightarrow \infty$.

The series on the right hand side of Equation (\ref{spectralrepneqn}) converges in the operator norm of $\mathcal B(H)$.
\end{theorem}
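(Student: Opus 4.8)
The plan is to reproduce the classical exhaustion argument for compact normal operators, with Theorem \ref{existenceofeigenvalue} serving as the engine that manufactures eigenvectors and with Example \ref{onedmiminvsubsp} together with Lemma \ref{propertiesrestrictedop} allowing me to pass to orthogonal complements. If $T=0$ the statement holds vacuously (empty system), so I assume $T\neq 0$.

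First I would construct the system inductively. Put $H_0:=H$ and $T_0:=T$. Suppose $T_n:=T|_{H_n}$ is a nonzero compact normal operator on the closed (hence quaternionic Hilbert) subspace $H_n$; it is compact as a restriction of a compact operator and normal by Lemma \ref{propertiesrestrictedop}. Applying Theorem \ref{existenceofeigenvalue} to $T_n$ and normalizing the vector $y$ produced in its proof, I obtain a unit eigenvector $\phi_{n+1}\in H_n$ with $T\phi_{n+1}=\phi_{n+1}q_{n+1}$ and $|q_{n+1}|=\|T_n\|$. The computation in Example \ref{onedmiminvsubsp} then shows $T^{*}\phi_{n+1}=\phi_{n+1}\overline{q_{n+1}}$, so $\mathbb H\phi_{n+1}$ reduces $T$; consequently $H_{n+1}:=H_n\cap\{\phi_{n+1}\}^{\perp}$ is again invariant and reduces $T$, and I set $T_{n+1}:=T|_{H_{n+1}}$. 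By construction $\phi_{n+1}\perp\phi_1,\dots,\phi_n$, so $\{\phi_n\}$ is orthonormal, and since $H_{n+1}\subseteq H_n$ the quantities $|q_{n+1}|=\|T_n\|$ are non-increasing. If some $T_N=0$ the process terminates and the representation is a finite sum; otherwise it runs indefinitely.

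Next I would verify, in the infinite case, that $q_n\to 0$. Since the $\phi_n$ are orthonormal and $\|T\phi_n\|=|q_n|$, one gets $\|T\phi_n-T\phi_m\|^{2}=|q_n|^{2}+|q_m|^{2}$ for $n\neq m$; were $|q_n|\geq\delta>0$ for all $n$, the sequence $\{T\phi_n\}$ would admit no convergent subsequence, contradicting compactness. Hence the non-increasing sequence $|q_n|$ tends to $0$. Finally I would identify the series with $T$ in operator norm. Writing $F_N u:=\sum_{n=1}^{N}\phi_n q_n\langle\phi_n,u\rangle$, a direct check gives $F_N=T$ on $\text{span}\{\phi_1,\dots,\phi_N\}$ and $F_N=0$ on $H_N=\text{span}\{\phi_1,\dots,\phi_N\}^{\perp}$; thus $T-F_N$ vanishes on $\text{span}\{\phi_1,\dots,\phi_N\}$ and restricts to $T_N$ on $H_N$. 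As these are orthogonal reducing subspaces, $\|T-F_N\|=\|T_N\|=|q_{N+1}|\to 0$, which is precisely operator-norm convergence of the series in (\ref{spectralrepneqn}).

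I expect the points demanding the most care to be the following. First, one must confirm that the reducing-subspace argument of Example \ref{onedmiminvsubsp} is legitimately reused at every stage: it relies only on $T\phi=\phi q$ with $q\neq 0$ and on normality, not on $|q|$ equaling the global norm, so it applies verbatim to each $T_n$. Second, the non-commutative bookkeeping in $F_N$ requires keeping the scalar $q_n$ and the inner product $\langle\phi_n,u\rangle$ in the correct order to the right of the vector $\phi_n$, and one should record that $N(T)=\bigcap_n H_n$, so that the component of $u$ orthogonal to all $\phi_n$ lies in $N(T)$ and contributes nothing to $Tu$; this is what makes the operator-norm statement equivalent to the pointwise formula (\ref{spectralrepneqn}) for every $u\in H$.
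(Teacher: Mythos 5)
Your proposal is correct and follows essentially the same route as the paper: induction via Theorem \ref{existenceofeigenvalue}, passage to orthogonal complements via Example \ref{onedmiminvsubsp} and Lemma \ref{propertiesrestrictedop}, the same Pythagorean compactness argument for $q_n\to 0$, and a tail estimate $\|T-F_N\|=\|T_N\|\to 0$ that matches the paper's bound $\|Tx-\sum_{r=1}^{n}\phi_rq_r\langle\phi_r,x\rangle\|=\|T_nx_n\|\le|q_n|\,\|x\|$. No substantive differences.
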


\begin{proof}
If $T=0$, then there is nothing to prove. Hence assume that $T\neq 0$. Set $T_1=T$ and $H_1=H$.
Since $T_1$ is compact and normal by Theorem \ref{existenceofeigenvalue}, there exists a $\phi_1\in H_1 \setminus {\{0}\}$ and $q_1\in \mathbb H\setminus {\{0}\}$ such that $T\phi_1=\phi_1q_1$. Also, note that $\|T_1\|=|q_1|$. By  Example \ref{onedmiminvsubsp}, the space $H_1:=\displaystyle \text{span}_{\mathbb H}{\{\phi_1}\}^{\bot}$ is a reducing subspace for $T$. Next, let $T_2:=T_1|_{H_2}$. Then either $T_2=0$ or $T_2\neq 0$. In case if $T_2=0$, there is nothing to proceed further. If $T_2\neq 0$, then $T_2$ is normal by (\ref{restrictednormality}) of Lemma \ref{propertiesrestrictedop}.
Thus, again by Theorem \ref{existenceofeigenvalue} there exists $q_2\in \mathbb H\setminus {\{0}\}$ with $|q_2|=\|T_2\|,\; \phi_2\in H_2\setminus {\{0}\}$ such that $T\phi_2=T_2\phi_2=\phi_2q_2$.
Note that $|q_2|\leq |q_1|$ and $\phi_1$ and $\phi_2$ are orthogonal by construction.

Let $H_3:=\displaystyle \text{span}_{\mathbb H}{\{\phi_1,\phi_2}\}^{\bot}$. Since $T$ is normal and $H_3$ is a reducing subspace for $T_3:=T|_{H_3}$, we have that $T_3$ is normal and compact. Now either $T_3=0$ or $T_3\neq 0$. If $T_3\neq 0$, then there exists a quaternion  $q_3\in \mathbb H\setminus {\{0}\}$ and $\phi_3\in H_3\setminus {\{0}\}$ such that $T\phi_3=T_3\phi_3=\phi_3q_3$ and $|q_3|\leq |q_2|$. Bye construction we have that $\phi_3$ is orthogonal to both $\phi_1$ and $\phi_2$.

Proceeding in this way, we  end up with either $T_n=0$ for some $n\in \mathbb N$ or there exists a sequence $(q_n)$ of non zero quaternions and a sequence of vectors $(\phi_n)\subset H$ satisfying:
\begin{enumerate}
\item $T\phi_n=\phi_nq_n$ and $|q_n|=\|T_n\|$ for each $n\in \mathbb N$,
\item $|q_{n+1}|\leq |q_n|$ for each $n\in \mathbb N$,
\item $\phi_r$ is orthogonal to $\phi_s$ for each $r,s\in \mathbb N $ and $r\neq s$.
\end{enumerate}
Next, we claim that  if $(q_n)$ is infinite, then $q_n\rightarrow 0$ as $n\rightarrow \infty$.
If this is not the case, there exists $\epsilon>0$ such that $|q_n|>\epsilon$ for infinitely many $n\in \mathbb N$. Let $S={\{r\in \mathbb N: |q_r|>\epsilon}\}$. Then we have $T\phi_r=\phi_rq_r$ for each $r\in S$. Thus, for $r,s\in S$, we have
\begin{align*}
\|T\phi_r-T\phi_s\|^2&=\|\phi_rq_r-\phi_sq_s\|^2\\
                    &=|q_r|^2+|q_s|^2\\
                    &>2\epsilon^2.
\end{align*}
This shows that $(T\phi_r)$ is not Cauchy in $H$. But, this contradicts the fact that $T$ is compact. Hence our assumption that $q_n\nrightarrow 0$ is wrong.

Next, we obtain the representation of $T$ as in Equation (\ref{spectralrepneqn}).

For $x\in H$, define $x_n:=x-\displaystyle \sum_{r=1}^{n-1}\phi_r\langle \phi_r,x\rangle$ for each $n\in \mathbb N$. Then $\langle x_n,\phi_r\rangle =0$ for each $r=1,2,\dots,n-1$.

We have the following two cases:\\

Case $1$: $T_n=0$ for some $n\in \mathbb N$

In this case, we have $0=T_nx_n=Tx_n$. Thus, $Tx=\displaystyle  \sum_{r=1}^{n-1}\phi_rq_r\langle \phi_r,x\rangle$ for each $x\in H$. That is, $T$ is a finite rank operator with rank $n-1$.

 Case $2$: $T_n\neq 0$ for any $n\in \mathbb N$\\
Since $x_n\in H_{n}^{\bot}$ for each $n\in \mathbb N$, it can be easily checked by the Pythagorean property that
$\|x_n\|\leq \|x\|$ for each $n\in \mathbb N$. Thus,
\begin{align*}
 \|Tx-\displaystyle \sum_{r=1}^n \phi_r q_r\langle \phi_r,x\rangle\|=\|T_nx_n\|&\leq |q_n|\|x_n\|\\
                                                                    &\leq |q_n|\|x\|\\
                                                                    &\rightarrow 0 \;\text{as}\; n\rightarrow \infty.
\end{align*}
That is $Tx=\displaystyle \sum_{n=1}^{\infty} \phi_n q_n\langle \phi_n,x\rangle$  for each $x\in H$.
\end{proof}
\begin{remark}
Note that if $q$ is a right eigenvalue for $T$ and $p\in [q]$, then $p$ is also a right eigenvalue for $T$. Hence by Theorem \ref{spectralthm}, we have that $\sigma_{pS}(T)={\{[q_n]:n\in \mathbb N}\}$ and $\sigma_{S}(T)\subseteq \sigma_{pS}(T)\cup {\{0}\}$.
\end{remark}

\section{Norm attaining Operators}

In this section we extend the Lindenstrauss theorem on norm attaining operators from the classical case to the quaternionic case. Explicitly, we show that the set of all quaternionic norm attaining   operators is dense in the
space of all bounded quaternionic operators with respect to the operator norm.

Recall that a bounded right linear operator $T$ is said to be norm attaining if there exists a $x_0\in S_H$ such that $\|Tx_0\|=\|T\|$. As $\|Tx\|=\||T|x\|$ for all $x\in H$ and $\|T\|=\||T|\|$, it follows that $T$ is norm attaining if and only if $|T|$ is norm attaining.

     We show that every  quaternion compact operator is norm attaining. In the case of operators on complex Hilbert space, this  can be proved by the help of  Banach-Alaouglu's theorem. In our  case we prove it by using Theorem \ref{existenceofeigenvalue}.

We denote the set of all norm attaining operators defined between $H_1$ and $H_2$ by $\mathcal N(H_1,H_2)$ and $\mathcal N(H,H)$ by $\mathcal N(H)$.
\begin{proposition}
Let $T\in \mathcal B(H)$ be compact. Then $T\in \mathcal N(H)$.
\end{proposition}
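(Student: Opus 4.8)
The plan is to deduce norm attainment of $T$ from the existence of a genuine right eigenvector of the compact positive operator $T^*T$, obtained through Theorem \ref{existenceofeigenvalue}.

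First I would check that $T^*T$ is compact. This is the only place compactness of $T$ is used, and it is immediate: if $(x_n)$ is a bounded sequence in $H$, then by compactness of $T$ some subsequence satisfies $Tx_{n_k}\to z$, and since $T^*$ is bounded, hence continuous, we get $T^*Tx_{n_k}=T^*(Tx_{n_k})\to T^*z$; thus $(T^*Tx_{n_k})$ converges and $T^*T$ is compact. Since $\langle x,T^*Tx\rangle=\|Tx\|^2\geq 0$, the operator $T^*T$ is positive, in particular self-adjoint and therefore normal.

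Next I would apply Theorem \ref{existenceofeigenvalue} to $T^*T$. Its proof does not merely locate a point of the spherical point spectrum but actually produces a nonzero vector $y$ with $(T^*T)y=yq$, where $|q|=\|T^*T\|=\|T\|^2$. Normalizing, put $\phi=y/\|y\|\in S_H$, so that $T^*T\phi=\phi q$. Finally I would use positivity to identify $q$ and conclude. Since
\begin{equation*}
\|T\phi\|^2=\langle T\phi,T\phi\rangle=\langle \phi,T^*T\phi\rangle=\langle \phi,\phi q\rangle=\|\phi\|^2 q=q,
\end{equation*}
the quaternion $q=\|T\phi\|^2$ is real and nonnegative; combined with $|q|=\|T\|^2$ this forces $q=\|T\|^2$, whence $\|T\phi\|=\|T\|$ and $T\in\mathcal N(H)$.

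The only delicate point is making sure one extracts an honest right eigenvector $T^*T\phi=\phi q$ (and not merely a point $q\in\sigma_{pS}(T^*T)$) with a real eigenvalue; passing through the self-adjoint positive operator $T^*T$ makes the reality of the relevant eigenvalue automatic, as the displayed computation shows. A tempting alternative is to use the remark that $T$ is norm attaining if and only if $|T|$ is, and then apply Theorem \ref{existenceofeigenvalue} to $|T|=(T^*T)^{\frac{1}{2}}$; I would avoid this route because it requires the extra, less immediate fact that the square root of a compact positive operator is again compact, whereas working with $T^*T$ directly needs only the trivial compactness argument above.
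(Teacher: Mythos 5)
Your argument is correct, but it takes a genuinely different route from the paper's. The paper applies Theorem \ref{existenceofeigenvalue} to the modulus $|T|=(T^*T)^{\frac{1}{2}}$: it cites \cite{Fashandi} for the fact that $|T|$ is compact whenever $T$ is, notes that $|T|$ is positive, hence self-adjoint and normal, obtains a right eigenvalue of modulus $\||T|\|=\|T\|$, and then invokes the observation made just before the proposition that $T$ attains its norm if and only if $|T|$ does. You instead work with $T^*T$, whose compactness is immediate from the continuity of $T^*$, and you then do a little extra work that the paper avoids: you read an honest right eigenvector out of the \emph{proof} of Theorem \ref{existenceofeigenvalue} (the statement only asserts $q\in\sigma_{pS}(T^*T)$), you use the $C^*$-identity $\|T^*T\|=\|T\|^2$, and you verify via $\langle\phi,T^*T\phi\rangle=\|T\phi\|^2$ that the eigenvalue is real and equals $\|T\|^2$ (all of which is sound with the paper's convention $\langle u,vp\rangle=\langle u,v\rangle p$). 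What your approach buys is independence from the external fact that the square root of a compact positive operator is again compact; what it costs is the eigenvalue-identification step and the reliance on the construction inside the proof of Theorem \ref{existenceofeigenvalue} rather than on its statement. Both arguments are valid; yours is arguably the more self-contained of the two.
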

\begin{proof}
Since $T$ is compact, $|T|$ is compact as well (see \cite{Fashandi}). The operator $|T|$ is also self-adjoint and hence normal because it is positive by definition  (see \cite[Proposition 2.17(b)]{Ghiloni1}). Hence by Theorem \ref{existenceofeigenvalue}, $\|T\|$ is a right eigenvalue. Hence   the result follows.
\end{proof}
\begin{lemma}\label{normattainingequivalnce}
Let $T\in \mathcal B(H)$ be normal  and $T_{+}$ be such that $\tilde T_{+}=T$ as in Proposition \ref{extension}. Then  $T_{+}\in \mathcal N(H^{Jm}_{+})$   if and only if  $T\in \mathcal N(H)$.
\end{lemma}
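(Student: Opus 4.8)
The plan is to prove the two implications separately; the forward implication is essentially immediate, while the reverse one rests on the real-orthogonal decomposition $H=H^{Jm}_{+}\oplus H^{Jm}_{-}$ together with right $\mathbb H$-linearity. Throughout I use that the norm on $H^{Jm}_{+}$ is the restriction of the norm on $H$, that $S_{H^{Jm}_{+}}\subseteq S_H$, and that $\|T_{+}\|=\|\widetilde{T_{+}}\|=\|T\|$ by Proposition \ref{extension}(1).

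For the direction $T_{+}\in\mathcal N(H^{Jm}_{+})\Rightarrow T\in\mathcal N(H)$, I would simply take a point $x_0\in S_{H^{Jm}_{+}}$ at which $T_{+}$ attains its norm. Since $x_0\in S_H$, $T_{+}x_0=\widetilde{T_{+}}x_0=Tx_0$, and $\|T_{+}\|=\|T\|$, we get $\|Tx_0\|=\|T_{+}x_0\|=\|T_{+}\|=\|T\|$, so $T\in\mathcal N(H)$.

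The reverse direction is the substantive one, and I would first record two structural facts. Because $JT=TJ$ (the exact condition in Proposition \ref{extension} guaranteeing the existence of $T_{+}$), the operator $T$ leaves both $H^{Jm}_{+}$ and $H^{Jm}_{-}$ invariant: if $Ju=\pm um$ then $J(Tu)=T(Ju)=\pm(Tu)m$. Secondly, since $J$ is anti self-adjoint and unitary, $H^{Jm}_{+}$ and $H^{Jm}_{-}$ are orthogonal for the real inner product $\mathrm{Re}\langle\cdot,\cdot\rangle$: for $u\in H^{Jm}_{+}$ and $v\in H^{Jm}_{-}$ the identity $\langle Ju,v\rangle=-\langle u,Jv\rangle$ reduces to $m\langle u,v\rangle+\langle u,v\rangle m=0$, which forces $\mathrm{Re}\langle u,v\rangle=0$ and hence $\|u+v\|^2=\|u\|^2+\|v\|^2$. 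Now assume $\|Tx_0\|=\|T\|$ for some $x_0\in S_H$ and write $x_0=x_{+}+x_{-}$ with $x_{\pm}\in H^{Jm}_{\pm}$. Invariance gives $Tx_{\pm}\in H^{Jm}_{\pm}$, so the Pythagorean identity yields $\|x_{+}\|^2+\|x_{-}\|^2=1$ and $\|T\|^2=\|Tx_{+}\|^2+\|Tx_{-}\|^2$. Combining with $\|Tx_{\pm}\|\le\|T\|\,\|x_{\pm}\|$ gives $\|T\|^2=\|Tx_{+}\|^2+\|Tx_{-}\|^2\le\|T\|^2(\|x_{+}\|^2+\|x_{-}\|^2)=\|T\|^2$, an equality throughout, so $\|Tx_{\pm}\|=\|T\|\,\|x_{\pm}\|$ for both signs.

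The final step, which I expect to be the only real obstacle, is to manufacture a norm attaining vector for $T_{+}$ inside $H^{Jm}_{+}$ even when $x_{+}=0$ and only $x_{-}\neq 0$; here the vector handed to us by norm attainment of $T$ may lie entirely in the ``wrong'' summand. I would resolve this using right $\mathbb H$-linearity: fix $n\in\mathbb S$ with $mn=-nm$ and set $R_n x:=xn$. Then $R_n$ is a real-linear isometry that maps $H^{Jm}_{-}$ onto $H^{Jm}_{+}$ and commutes with $T$, since $T(xn)=(Tx)n$. If $x_{+}\neq 0$, then $x_{+}/\|x_{+}\|\in S_{H^{Jm}_{+}}$ already satisfies $\|T_{+}(x_{+}/\|x_{+}\|)\|=\|T\|=\|T_{+}\|$. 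If instead $x_{+}=0$, put $w:=R_n x_{-}=x_{-}n\in H^{Jm}_{+}$; then $\|w\|=\|x_{-}\|$ and $T_{+}w=T(x_{-}n)=(Tx_{-})n$, so $\|T_{+}w\|=\|Tx_{-}\|=\|T\|\,\|x_{-}\|=\|T\|\,\|w\|$, and $w/\|w\|\in S_{H^{Jm}_{+}}$ witnesses norm attainment. In either case $T_{+}\in\mathcal N(H^{Jm}_{+})$, which completes the proof.
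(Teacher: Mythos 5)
Your proof is correct, and for the nontrivial direction it takes a genuinely different route from the paper's. The forward implication is the same one\textendash line argument in both. For the converse, the paper produces a vector in $H^{Jm}_{+}$ by the single explicit formula $y_0=\tfrac{1}{\sqrt2}\bigl(x_0-(Jx_0)m\bigr)$ (essentially $\sqrt2$ times the projection of $x_0$ onto $H^{Jm}_{+}$) and asserts $\|y_0\|=1$; that assertion requires the cross terms $\langle (Jx_0)m,x_0\rangle+\langle x_0,(Jx_0)m\rangle$ to vanish, i.e.\ that $m$ anticommute with the purely imaginary quaternion $\langle x_0,Jx_0\rangle$, which is not automatic --- and indeed $y_0=0$ whenever $x_0\in H^{Jm}_{-}$. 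You instead decompose $x_0=x_{+}+x_{-}$ along $H=H^{Jm}_{+}\oplus H^{Jm}_{-}$, use the invariance of $H^{Jm}_{\pm}$ under $T$ (from $JT=TJ$) and the real\textendash orthogonality of the two summands (from anti\textendash self\textendash adjointness of $J$) to get a Pythagorean identity, squeeze to obtain $\|Tx_{\pm}\|=\|T\|\,\|x_{\pm}\|$, and handle the degenerate case $x_{+}=0$ with the right\textendash multiplication isometry $x\mapsto xn$ for $n\in\mathbb S$ anticommuting with $m$, which maps $H^{Jm}_{-}$ onto $H^{Jm}_{+}$ and commutes with $T$ by right linearity. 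Every step checks out (including the existence of such an $n$ and the fact that $T_{+}$ agrees with $T$ on $H^{Jm}_{+}$). What your approach buys is robustness: it closes the normalization gap in the paper's argument and works regardless of how $x_0$ distributes over the two summands; what the paper's approach buys, when the cross terms do cancel, is brevity, since it names the norming vector in one line.
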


\begin{proof}
If $T_{+}\in \mathcal N(H^{Jm}_{+})$, then there exists $x_0\in S_{H^{Jm}_{+}}$  such that $\|T_{+}x_0\|=\|T_{+}\|$. Since $\|T_{+}\|=\|T\|$, the conclusion follows.

On the other hand, suppose $T\in \mathcal N(H)$. Choose $x_0\in S_{H}$ such that $\|Tx_0\|=\|T\|$. Let $y_0:=\dfrac{1}{\sqrt{2}}(x_0-(Jx_0)m)$. Then $y_0\in H^{Jm}_{+}$. Also,
\begin{align*}
\|y_0\|^2&=\dfrac{1}{2}\|(x_0-(Jx_0)m)\|^2\\
                &=\dfrac{1}{2}\langle (x_0-(Jx_0)m),(x_0-(Jx_0)m)\rangle\\
                &=\dfrac{1}{2} \Big(\langle x_0,x_0\rangle -\langle (Jx_0)m,x_0\rangle-\langle x_0,(Jx_0)m\rangle+\langle (Jx_0)m,(Jx_0)m\rangle\Big)\\
                &=\dfrac{1}{2} \Big(1 -\langle (Jx_0)m,x_0\rangle-\langle x_0,(Jx_0)m\rangle+1\Big).
                \end{align*}
Note that as $J$ is anti self-adjoint,  $\langle x_0,Jx_0\rangle=-  \overline{\langle x_0,Jx_0\rangle}$. With this, we have $\langle (Jx_0)m,x_0\rangle=-\langle x_0,Jx_0\rangle m$. Hence $\|y_0\|=1$.

Next using the fact that $J$ commutes with $T$ and $T^*$ and $T(H^{Jm}_{+})\subseteq H^{Jm}_{+}$, we can conclude that
\begin{equation*}
\|T_{+}y_0\|^2=\|Tx_0\|^2=\|T\|^2=\|T_{+}\|^2.
\end{equation*}
Thus $T_{+}$ attains norm at $y_0$.
\end{proof}
\begin{proposition}\label{positivenormattaining}
Let $B\in \mathcal B(H)$ be positive. Then  for given $\epsilon>0$, there exists a rank one positive operator $C$ with  $\|C\|\leq \epsilon$ and $y\in N(B)^{\bot}$ such that
\begin{equation*}
(B+C)y=\|B+C\|y.
\end{equation*}
That is $B+C$ attains norm at $y$.
\end{proposition}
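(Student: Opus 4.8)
The plan is to produce the norming eigenvector and the perturbation simultaneously by a single resolvent computation, exploiting the fact that a positive rank-one operator on $H$ has the form $Cx = \phi\,(c\,\langle \phi, x\rangle)$ with $\phi \in S_H$ and $c > 0$ real (right-linearity and positivity are immediate, and $\|C\| = c$). First I would dispose of the trivial case $B = 0$ and assume $\beta := \|B\| > 0$. Since $B$ is positive and self-adjoint, Theorem \ref{normaloid} gives $\beta = \sup\{\langle Bx, x\rangle : x \in S_H\}$ (the numerical range being real and nonnegative), and the elementary estimate $\|Bx - \beta x\|^2 \le 2\beta(\beta - \langle Bx, x\rangle)$, valid because $\|Bx\| \le \beta$ and $\langle Bx, x\rangle$ is real, shows that any sequence with $\langle Bx_n, x_n\rangle \to \beta$ satisfies $\|Bx_n - \beta x_n\| \to 0$. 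Moreover $\beta > 0$ forces $\langle x_n, z\rangle \to 0$ for every $z \in N(B)$, so after projecting onto $N(B)^{\perp}$ and renormalising one obtains approximate eigenvectors lying in $N(B)^{\perp}$.

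Next I fix $\lambda_0 := \beta + \tfrac{\epsilon}{2}$. Because $\lambda_0 > \|B\|$, the operator $\lambda_0 I - B$ is invertible (Neumann series) with $\|(\lambda_0 - B)^{-1}\| = (\lambda_0 - \beta)^{-1} = 2/\epsilon$, and this norm is approached along the approximate eigenvectors just described; hence I may choose $\phi \in N(B)^{\perp} \cap S_H$ with $m(\lambda_0) := \langle \phi, (\lambda_0 - B)^{-1}\phi\rangle \ge 1/\epsilon$, where $m(\lambda_0)$ is real since $(\lambda_0 - B)^{-1}$ is self-adjoint. I then set $c := 1/m(\lambda_0) \le \epsilon$, $C := c\,\phi\langle\phi, \cdot\rangle$, and $w := (\lambda_0 - B)^{-1}\phi$, $y := w/\|w\|$. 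A direct computation using $c\,m(\lambda_0) = 1$ and the reality of $c$ and $m(\lambda_0)$ gives $(B + C)w = Bw + c\phi\langle \phi, w\rangle = Bw + \phi = \lambda_0 w$, so $(B+C)y = \lambda_0 y$; and $y \in N(B)^{\perp}$ because $N(B)$ reduces $B$ and hence its resolvent.

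It remains to identify $\lambda_0$ with $\|B + C\|$, which I expect to be the main obstacle, since a priori only $\lambda_0 \le \|B+C\| \le \beta + c$ is clear. I would argue by contradiction: if $\mu := \|B + C\| > \lambda_0$, then, $B + C$ being positive and self-adjoint, Theorem \ref{normaloid} again provides unit vectors $z_k$ with $\|(B + C)z_k - \mu z_k\| \to 0$. Since $\mu > \beta$, inverting $\mu I - B$ and pairing with $\phi$ reduces this to $\langle\phi, z_k\rangle = c\,m(\mu)\,\langle\phi, z_k\rangle + o(1)$, where $m(\mu) = \langle\phi, (\mu - B)^{-1}\phi\rangle$. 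If $\langle\phi, z_k\rangle \not\to 0$, this forces $c\,m(\mu) = 1$; but the resolvent identity shows that $m$ is strictly decreasing on $(\beta, \infty)$, so $c\,m(\mu) < c\,m(\lambda_0) = 1$, a contradiction. If instead $\langle\phi, z_k\rangle \to 0$, the same relation yields $z_k \to 0$, contradicting $\|z_k\| = 1$. Hence $\|B + C\| = \lambda_0$, and $B + C$ attains its norm at $y$. Throughout, the only quaternionic scalars entering the resolvent manipulations are the central real quantities $c$, $\lambda_0$ and $m(\cdot)$, so non-commutativity causes no difficulty; the delicate points are the strict monotonicity of $m$ and the reduction of the approximate-eigenvalue relation to a scalar equation.
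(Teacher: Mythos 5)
Your argument is correct, but it takes a genuinely different route from the paper. The paper does not reprove the perturbation lemma at all: since $B\geq 0$ is self-adjoint, it commutes with an anti self-adjoint unitary $J$, so by Proposition \ref{extension} it is the extension of a $\mathbb{C}_m$-linear positive operator $B_{+}$ on $H_{+}^{Jm}$; the classical Lemma 1 of \cite{enfloetal} is then applied to $B_{+}$ to produce $C_{+}$ and $y$, and the result is pulled back to $H$ via Remark \ref{sumextension} and Lemma \ref{normattainingequivalnce}. You instead give a self-contained quaternionic proof by resolvent perturbation: choosing $\lambda_0=\beta+\epsilon/2$, an almost-norming $\phi\in N(B)^{\bot}\cap S_H$, $c=1/m(\lambda_0)$ and $y$ proportional to $(\lambda_0-B)^{-1}\phi$, and then identifying $\|B+C\|=\lambda_0$ by the strict monotonicity of $\mu\mapsto m(\mu)$. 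This is essentially the Enflo--Kover--Smithies argument transplanted directly into the quaternionic setting; it works because every scalar you manipulate ($c$, $\lambda_0$, $m(\cdot)$) is real, hence central, and because Theorem \ref{normaloid} supplies the approximate eigenvectors for the positive operators $B$ and $B+C$. What your route buys is independence from the slice-complexification machinery (Proposition \ref{extension}, Remark \ref{sumextension}, Lemma \ref{normattainingequivalnce}) and from the external citation; what the paper's route buys is brevity. Two small points to tighten: the justification that the approximate eigenvectors can be taken in $N(B)^{\bot}$ should go through the orthogonal splitting $x_n=u_n+v_n$ with $v_n\in N(B)$, observing that $Bx_n=Bu_n\in N(B)^{\bot}$ and hence $\|Bu_n-\beta u_n\|^2+\beta^2\|v_n\|^2\to 0$ forces $\|v_n\|\to 0$ (pointwise decay of $\langle x_n,z\rangle$ alone is only weak convergence); and the strict inequality $m(\mu)<m(\lambda_0)$ needs the remark that $(\mu-B)^{-1}(\lambda_0-B)^{-1}$ is positive and invertible, so its quadratic form at $\phi\neq 0$ is strictly positive. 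Both are routine. (The degenerate case $B=0$, where $N(B)^{\bot}=\{0\}$, is an artifact of the statement itself and affects the paper's proof equally.)
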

\begin{proof}
Let $\epsilon>0$. Since $B\geq 0$, there exists an anti-self-adjoint, unitary operator $J$ such that $JB=BJ$. Let $B_{+}$ be the unique operator on $H^{Jm}_{+}$ such that $\tilde B_{+}=B$. Now, by the classical theorem (\cite[Lemma 1]{enfloetal}), there exists a rank one positive operator, denote it by $C_{+}$ such that $\|C_{+}\|\leq \epsilon$ and $B_{+}+C_{+}\in \mathcal N(H^{Jm}_{+})$.  In fact, there exists $y\in N(B_+)^{\bot}$ such that $(B_{+}+C_{+})y=\|B_{+}+C_{+}\|y$. Now, by  Remark \ref{sumextension}, $B+C=\widetilde{B_{+}+C_{+}}$ and $B+C\in \mathcal N(H)$ by Lemma \ref{normattainingequivalnce}. It is clear that $C$ is a positive, rank one operator on $H$.

Also, we have that $(B+C)y=(B_{+}+C_{+})y=\|B_{+}+C_{+}\|y=\|B+C\|y$. As $N(B)=N(B_{+})$, we can conclude that $y\in N(B)^{\bot}$.
\end{proof}

Next, we extend proposition \ref{positivenormattaining} to the general case. For this purpose we use the polar decomposition of a quaternionic operator. Here we recall the details.
\begin{theorem}\label{normdensegeneral}
The set  $\mathcal N(H)$ is dense in $\mathcal B(H)$ with respect to the operator norm of $\mathcal B(H)$.
\end{theorem}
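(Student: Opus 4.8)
The plan is to reduce the general case to the positive case already settled in Proposition \ref{positivenormattaining} by means of the polar decomposition. Fix $T\in\mathcal B(H)$ and $\epsilon>0$, and write $T=V|T|$ with $|T|=(T^*T)^{\frac{1}{2}}\geq 0$ and $V$ the partial isometry satisfying $N(V)=N(T)$ and $\|Vx\|=\|x\|$ for all $x\in N(V)^{\bot}$. First I would record that $\|Tx\|=\||T|x\|$ for every $x\in H$, which follows from $\|Tx\|^2=\langle T^*Tx,x\rangle=\||T|x\|^2$; this yields $N(T)=N(|T|)$, and hence $N(V)=N(|T|)$.

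Next I would apply Proposition \ref{positivenormattaining} to the positive operator $B:=|T|$. This supplies a positive rank one operator $C$ with $\|C\|\leq\epsilon$ together with a unit vector $y\in N(|T|)^{\bot}$ such that, setting $S:=|T|+C$, one has $Sy=\|S\|y$. The proposed approximant to $T$ is then $VS=V|T|+VC=T+VC$. Since $V$ is a partial isometry, $\|V\|\leq 1$, so $\|VC\|\leq\|C\|\leq\epsilon$; thus $T+VC$ lies within $\epsilon$ of $T$ in the operator norm.

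It remains to verify that $VS=T+VC$ attains its norm, and $y$ is the natural candidate vector. Because $y\in N(|T|)^{\bot}=N(V)^{\bot}$, the isometry of $V$ on $N(V)^{\bot}$ gives $\|Vy\|=\|y\|=1$, so that
\[
\|(T+VC)y\|=\|VSy\|=\|S\|\,\|Vy\|=\|S\|.
\]
On the other hand $\|VS\|\leq\|V\|\,\|S\|\leq\|S\|$, and combining these shows $\|VS\|=\|S\|$ with the value attained at the unit vector $y$. Hence $T+VC\in\mathcal N(H)$, and as $\epsilon>0$ was arbitrary, $\mathcal N(H)$ is dense in $\mathcal B(H)$.

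The individual estimates are routine; the step I expect to be the crux is ensuring that the vector $y$ produced by the positive case actually lies in $N(V)^{\bot}$, so that $V$ acts isometrically on it. This is precisely what upgrades the trivial bound $\|VS\|\leq\|S\|$ to an attained equality, and it rests on chaining together $N(V)=N(T)$ (from the polar decomposition), $N(T)=N(|T|)$ (from $\|Tx\|=\||T|x\|$), and the containment $y\in N(|T|)^{\bot}$ delivered by Proposition \ref{positivenormattaining}.
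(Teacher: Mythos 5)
Your proposal is correct and follows essentially the same route as the paper: polar decomposition $T=V|T|$, the rank-one positive perturbation of $|T|$ from Proposition \ref{positivenormattaining}, and the observation that the norm-attaining vector lies in $N(|T|)^{\bot}=N(V)^{\bot}$ so that $V$ acts isometrically on it. Your write-up is in fact slightly more careful than the paper's in making the chain $N(V)=N(T)=N(|T|)$ explicit.
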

\begin{proof}
Let $T=V|T|$ be the polar decomposition of $T$.  Given $n\in \mathbb N$, By Proposition \ref{positivenormattaining}, there exists a rank one, positive  operator $C_n$  with $\|C_n\|\leq \frac{1}{n}$ such that $|T|+C_n\in \mathcal N(H)$.  In fact, there exists a sequence $(x_n)\subset H$ such that $(|T|+C_n)x_n=\||T|+C_n\|x_n$ for each $n\in \mathbb N$. Define $K_n:=VC_n$ for each $n\in \mathbb N$. Note that $x_n\in N(T)^{\bot}=N(V)^{\bot}$. Since
$V|_{N(V)^{\bot}}$ is an isometry, we have that $\|(T+K_n)x_n\|=\|V(|T|+C_n)x_n\|=\|(|T|+C_n)x_n\|=\||T|+C_n\|$. But $\|T+K_n\|\leq \||T|+C_n\|=\|(T+K_n)x_n\|$. This shows that $T+K_n$ attains norm at $x_n$.
\end{proof}

\begin{remark}
We can also prove that $\mathcal N(H_1,H_2)$ is norm dense in $\mathcal B(H_1,H_2)$ following the similar steps as in Theorem \ref{normdensegeneral}. For this purpose one has to obtain the polar decomposition theorem for $T\in \mathcal B(H_1,H_2)$. Again this can be done by following the similar steps in \cite[Theorem 2.20]{Ghiloni}.
\end{remark}
\bibliographystyle{plain}

\end{document}